\documentclass[12pt,reqno]{amsart} 
\mathsurround=3pt
\parindent=20pt
\usepackage{amsmath}
\usepackage{amssymb}
\baselineskip=1.5in
\textheight 650pt
\textwidth 400pt
\topmargin0pt

\newtheorem{thm}{Theorem}

\newtheorem{prop}[thm]{Proposition}

\newcommand{\vertiii}[1]{{\left\vert\kern-0.25ex\left\vert\kern-0.25ex\left\vert
#1 \right\vert\kern-0.25ex\right\vert\kern-0.25ex\right\vert}}

\def \lim   {\text {\rm lim}}

\def \tr    {\text {\rm tr}}

\def \argmin {\text {\rm argmin}}

\begin{document}

\title[Inequalities for the Wasserstein mean]{Inequalities for the Wasserstein mean of positive definite matrices}

\author[Rajendra Bhatia]{Rajendra Bhatia}
\address{Ashoka University, Sonepat\\ Haryana, 131029, India}

\email{rajendra.bhatia@ashoka.edu.in}

\author[Tanvi Jain]{Tanvi Jain}
\address{Indian Statistical Institute\\ New Delhi 110016, India}
\email{tanvi@isid.ac.in}

\author[Yongdo Lim]{Yongdo Lim}

\address{Department of Mathematics, Sungkyunkwan University\\ Suwon 440-746, Korea}

\email{ylim@skku.edu}

\subjclass[2010]{15A42, 15A18, 47A64, 47A30.}


\keywords{Positive definite matrices, Cartan mean, Wasserstein mean,
log Euclidean mean, majorisation, unitarily invariant norm.}

\maketitle

 \centerline{\it Dedicated to Fumio Hiai, and to the
memory of Denes Petz}

\begin{abstract}
We prove majorization inequalities for different means of positive definite matrices.
These include the Cartan mean (the Karcher mean),
the log Euclidean mean, the Wasserstein mean and the power mean.
\end{abstract}

\section{Introduction}
Let  $\mathbb{P}$ be the space of $n\times n$ complex positive
definite matrices. The \emph{Bures-Wasserstein distance} on ${\Bbb
P}$ is the metric defined as
\begin{equation}
d(A,B) = \left [\tr\,(A+B) - 2\, \tr\,\left(A^{1/2}BA^{1/2}
\right)^{1/2}\right ]^{1/2}.\label{eq1}
\end{equation}

Let $A_{1},\dots,A_{m}$ be given positive definite matrices and let
$ w =(w_{1},\dots,w_{m})$ be a vector of weights; i.e., $w_j\geq 0$
and $\sum_{j=1}^{m}w_{j}=1.$ Then the (\emph{weighted})
\emph{Wasserstein mean, or the Wasserstein barycentre} of
$A_{1},\dots,A_{m}$ is defined as
\begin{equation}
\Omega (w; A_1, \ldots, A_m) = \underset{X\in {\Bbb
P}}{\argmin}\,\,\, \sum\limits^{m}_{j=1} w_j d^2(X,A_j). \label{eq2}
\end{equation}
It can be shown that the function on ${\Bbb P}$ defined by the sum
on the right hand side of (\ref{eq2}) has a unique minimizer, and
the notation $\argmin$ is used for this minimizer. It is also known
that $\Omega$ is the unique positive definite solution of the
equation
\begin{equation}
X = \sum_{j=1}^{m}w_{j}\left(X^{1/2}A_{j}X^{1/2}\right)^{1/2}.
\label{eq3}
\end{equation}

In the special case $m=2,$ writing $(A_{1},A_{2})=(A,B)$ and
$(w_{1},w_{2})=(1-t,t),$ where $0\leq t\leq 1,$ we have an explicit
formula for $\Omega.$ Denoting this by $A\, \lozenge_t  B$ we have
\begin{equation}
A\, \lozenge_t
B=(1-t)^2A+t^2B+t(1-t)\left[(AB)^{1/2}+(BA)^{1/2}\right].
\label{eq4}
\end{equation}
The matrix $AB$ has only positive eigenvalues and, therefore, has a
unique square root with positive eigenvalues. This is the matrix
$(AB)^{1/2}$ in the expression above. The metric $d$ in (\ref{eq1})
is the distance corresponding to an underlying Riemannian metric on
${\Bbb P}$, and (\ref{eq4}) is an equation for the geodesic segment
joining two points $A$ and $B$ in the manifold ${\Bbb P}.$ The
special choice $t=1/2$ gives the midpoint of this geodesic. This is
denoted by
\begin{equation}
A\, \lozenge B=\frac{1}{4}\left[A+B+(AB)^{1/2}+(BA)^{1/2}\right],
\label{eq5}
\end{equation}
and can  be thought of as the Wasserstein mean of $A$ and $B.$

We refer the reader to our recent article \cite{BJL} for the relevance
and importance of the Wasserstein metric, mean and barycentre in
various areas like quantum information, statistics, optimal
transport and Riemannian geometry.

More familiar in the matrix theory literature, and much studied in the
past few years, has been the geometric mean, variously called the
Cartan mean, the Karcher mean or the Riemannian mean. To describe
this we start with the \emph{Cartan metric}
\begin{equation}
\delta(A,B)=||\log A^{-1/2}BA^{-1/2}||_{2}, \label{eq6}
\end{equation}
where $||X||_2=({\mathrm{tr}}X^*X)^{1/2}$ is the Frobenius norm on
matrices. The \emph{weighted Cartan mean} (or the \emph{weighted
geometric mean}) of $A_{1},\dots,A_{m}$ is defined as
\begin{equation}
G( w ;A_{1},\dots,A_{m})=\underset{X\in {\Bbb P}}{\argmin}\,\,\,
\sum\limits^{m}_{j=1}w_j \delta^2(X,A_{j}). \label{eq7}
\end{equation}
The (unique) solution of this minimization problem is also the
positive definite solution of the equation
\begin{equation}
\sum_{j=1}^mw_{j}\log(X^{-1/2}A_{j}X^{-1/2})=0. \label{eq8}
\end{equation}
The mean $G$ was introduced in matrix analysis by M. Moakher
\cite{m1} and R. Bhatia and J. Holbrook \cite{bh} as the solution of
the long standing problem of defining an appropriate geometric mean
of several positive definite matrices, and has since then been an
object of intense study.

Analogous to (\ref{eq4}) the equation of the geodesic segment
joining $A$ and $B$ with respect to the metric $\delta$ is
\begin{equation}
A\#_tB=A^{1/2}(A^{-1/2}BA^{-1/2})^{t}A^{1/2}, \ \ \ \ \ 0\leq t\leq
1. \label{eq9}
\end{equation}
This is also called the $t$-weighted geometric mean of $A$ and $B$.
When $t=1/2,$ this reduces to
\begin{equation}
A\#B=A^{1/2}(A^{-1/2}BA^{-1/2})^{1/2}A^{1/2}, \label{eq10}
\end{equation}
and is called the geometric mean of $A$ and $B.$

Presented with the two important means $\Omega$ and $G$, arising
from two different geometries on ${\Bbb P}$, it is natural to ask
for comparisons between them. The two metrics $d$ and $\delta$ are
strikingly different. Endowed with the metric $d$, the manifold
${\Bbb P}$ has nonnegative curvature \cite{t,BJL}; with the metric
$\delta$ it has nonpositive curvature \cite{rbh1}. The Cartan mean
has nice order properties: it is monotonic in variables
$A_{1},\dots,A_{m}$ \cite{ll} and lies between the harmonic and
arithmetic means; i.e.,
\begin{equation}
\left(\sum_{j=1}^mw_{j}A_{j}^{-1}\right)^{-1}\leq
G( w ;A_{1},\dots,A_{m})\leq \sum_{j=1}^mw_{j}A_{j}, \label{eq11}
\end{equation}
where $\leq$ is the L\"owner ordering; $A\leq B$ if $B-A$ is
positive semidefinite. It has been shown in \cite{BJL} that the
Wasserstein mean $\Omega$ is not monotonic in the variables
$A_{1},\dots,A_{m}$. While it is always bounded above by the
arithmetic mean, it may not be bounded below by the harmonic mean. \emph{A fortiori} the operator inequality $G\leq \Omega$ does not
hold. Somewhat surprisingly, good comparison theorems in terms of
log majorizations can be proved, and that is the main purpose of
this paper.

Let $x = (x_1, \ldots, x_n)$ and $y= (y_1, \ldots, y_n)$ be two
$n$-tuples of nonnegative numbers. Let $x_1^{\downarrow} \geq
x_2^{\downarrow} \geq \ldots \geq x_n^{\downarrow}$  be the
decreasing rearrangement of $x_{1},\dots,x_{n}.$  If for all $1\leq
k\leq n$
\begin{equation}
\prod^{k}_{j=1} x_j^{\downarrow} \,\,\, \leq
\prod^{k}_{j=1}y_j^{\downarrow},\,\,\, \label{eq12}
\end{equation}
we say that $x$ is  weakly log majorised by $y,$ and write this as
 \begin{equation}x \underset{w\log}{\prec} y. \label{eq13}
 \end{equation}
 If, in addition to (\ref{eq12}) we also have
\begin{equation}
\prod^{n}_{j=1} x_j^{\downarrow} \,\,\, =
\prod^{n}_{j=1}y_j^{\downarrow},\,\,\, \label{eq14}
\end{equation}
 we say $x$ is  log
majorised by $y,$ and write this as
\begin{equation}
x\underset{\log}{\prec} y.\label{eq15}
\end{equation}

Let $\lambda (A)$ stand for  the eigenvalue $n$-tuple of $A$. We
will show that $\lambda(G)\underset{\log}{\prec} \lambda(\Omega).$
In fact, we will prove a stronger result that involves another mean
called the \emph{log Euclidean mean}. This is the matrix defined as
\begin{equation}
L( w ;A_{1},\dots,A_{m})=\exp\left(\sum_{j=1}^{m}w_{j}\log
A_{j}\right).\label{eq16}
\end{equation}
If we replace the metric $\delta$ in (\ref{eq6}) by
\begin{equation}
\delta_L(A,B)=||\log A-\log B||_{2},\label{eq17}
\end{equation} and consider the corresponding minimization  problem
(\ref{eq7}), then the solution is the log Euclidean mean $L.$ In the
special case when $A_{1},\dots,A_{m}$ are pairwise commuting
positive definite matrices, we have
\begin{equation}
G=L=\prod_{j=1}^mA_{j}^{w_{j}}.\label{eq18}
\end{equation}

The first half of the following theorem was proved by Hiai and Petz
\cite{hp}, the second is new.

\begin{thm}\label{thm1}
Let $G=G(w;A_1,\ldots,A_m),L=L( w ;A_{1},\dots,A_{m})$ and
$\Omega=\Omega( w ;A_{1},\dots,A_{m})$  be the Cartan, the log
Euclidean, and the  Wasserstein means of positive definite matrices
$A_{1},\dots,A_{m}.$  Then
\begin{equation}
\lambda(G)\underset{\log}{\prec} \lambda(L)\underset{w\log}{\prec}
\lambda(\Omega).\label{eq19}
\end{equation}
\end{thm}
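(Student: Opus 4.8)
The plan is to split \eqref{eq19} into its two halves and handle each by a different mechanism. The first relation $\lambda(G)\underset{\log}{\prec}\lambda(L)$ is the Hiai--Petz result, which I would recall but not reprove; it rests on the fact that the Cartan mean is the $p\to 0$ limit of the power means and on the Araki--Lieb--Thirring inequality, giving $\det G=\det L$ (both equal $\prod_j(\det A_j)^{w_j}$) and the log-majorisation of the top partial products. So the real work is the second relation $\lambda(L)\underset{w\log}{\prec}\lambda(\Omega)$.

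For the weak log-majorisation I would use the standard reduction: $x\underset{w\log}{\prec}y$ for positive $n$-tuples is equivalent to $\prod_{j=1}^k\lambda_j^\downarrow(A)\le\prod_{j=1}^k\lambda_j^\downarrow(B)$ for all $k$, and by the theory of antisymmetric tensor powers this follows once one shows $\|{\wedge}^k L\|\le\|{\wedge}^k\Omega\|$ in operator norm, i.e. $\lambda_1^\downarrow({\wedge}^k L)\le\lambda_1^\downarrow({\wedge}^k\Omega)$. Since $\wedge^k$ is multiplicative, sends positive definite matrices to positive definite matrices, and sends $\exp$ of a sum of commuting-in-spirit... (more carefully: $\wedge^k(\log A_j)$ are the appropriate derived operators), the key structural fact I want is that $\Omega$ \emph{dominates} $L$ in the sense $\lambda_1(L)\le\lambda_1(\Omega)$ for \emph{all} weighted families, and that this inequality is stable under taking $\wedge^k$. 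Concretely, I would aim to prove the single-eigenvalue inequality
\[
\lambda_1\!\left(\exp\Bigl(\sum_j w_j\log A_j\Bigr)\right)\ \le\ \lambda_1\!\bigl(\Omega(w;A_1,\dots,A_m)\bigr),
\]
and then apply it with each $A_j$ replaced by $\wedge^k A_j$, using that $\wedge^k$ commutes with $\log$ and $\exp$ and that the fixed-point equation \eqref{eq3} is respected by $\wedge^k$ (because $\wedge^k$ is a homomorphism and preserves the positive square root), so that $\wedge^k\Omega=\Omega(w;\wedge^k A_1,\dots,\wedge^k A_m)$.

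To get the scalar inequality $\lambda_1(L)\le\lambda_1(\Omega)$ I would exploit the fixed-point characterisation \eqref{eq3}. Writing $\rho=\lambda_1(\Omega)$ and fixing a unit eigenvector $u$ with $\Omega u=\rho u$, one has from \eqref{eq3} that $\rho=\langle u,\Omega u\rangle=\sum_j w_j\langle u,(\Omega^{1/2}A_j\Omega^{1/2})^{1/2}u\rangle$. Then I would bound each term below: by operator concavity of $t\mapsto t^{1/2}$ and by $\Omega\le\rho I$ one gets $\langle u,(\Omega^{1/2}A_j\Omega^{1/2})^{1/2}u\rangle\ge \rho^{-1/2}\langle u,(\Omega^{1/2}A_j\Omega^{1/2})u\rangle^{1/2}\cdot(\text{something})$; more robustly, one can use $\mathrm{tr}\,(\Omega^{1/2}A_j\Omega^{1/2})^{1/2}\ge \mathrm{tr}\,\Omega^{1/2}\cdot(\dots)$ type bounds, or pass to the known operator inequality $\Omega\ge\bigl(\sum_j w_j A_j^{-1}\bigr)^{-1}$ fails — so instead I would use the comparison $A\lozenge_t B$ in \eqref{eq4} together with an arithmetic--geometric argument: since $(AB)^{1/2}+(BA)^{1/2}\le A+B$ in a suitable sense controls things, iterate \eqref{eq3} starting from $L$ and show the iteration's spectral radius does not increase. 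The cleanest route is probably the one in our previous paper: show that $X\mapsto\sum_j w_j(X^{1/2}A_jX^{1/2})^{1/2}$ maps $\{X:\lambda_1(X)\le\lambda_1(L)\}$ into itself is \emph{false}, so instead use that $\Omega\le\sum_j w_j A_j$ (stated in the Introduction, from \cite{BJL}) only gives the wrong direction for $\lambda_1$; hence the honest argument must use a genuine majorisation inequality, namely $\lambda(L)\underset{w\log}{\prec}\lambda(\sum_j w_j A_j)$ is classical, and then $\lambda(\Omega)$ must be squeezed between. \textbf{The main obstacle} is precisely this: unlike $G$, the Wasserstein mean is \emph{not} monotone and \emph{not} bounded below by the harmonic mean, so one cannot import a clean operator inequality; the proof of $\lambda(L)\underset{w\log}{\prec}\lambda(\Omega)$ must extract the partial-product bounds directly from the nonlinear fixed-point equation \eqref{eq3}, most likely by combining the $\wedge^k$ reduction above with the trace identity $\mathrm{tr}\,\Omega=\sum_j w_j\,\mathrm{tr}\,(\Omega^{1/2}A_j\Omega^{1/2})^{1/2}$ and a Golden--Thompson / Araki--Lieb--Thirring estimate relating $\mathrm{tr}\,(\Omega^{1/2}A_j\Omega^{1/2})^{1/2}$ to $\mathrm{tr}\,\exp(\tfrac12\log\Omega+\tfrac12\log A_j)$, so that the defining equation forces $\lambda_1(\Omega)$ above the corresponding quantity for $L$.
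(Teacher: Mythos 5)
There are two genuine gaps, both in the second (and new) half of the theorem. The more serious one is your claim that $\Lambda^k\Omega=\Omega(w;\Lambda^kA_1,\dots,\Lambda^kA_m)$ because ``the fixed-point equation \eqref{eq3} is respected by $\wedge^k$, since $\wedge^k$ is a homomorphism and preserves the positive square root.'' The map $A\mapsto\Lambda^kA$ is multiplicative but \emph{not additive}: $\Lambda^k(X+Y)\neq\Lambda^kX+\Lambda^kY$, so applying $\Lambda^k$ to $X=\sum_jw_j(X^{1/2}A_jX^{1/2})^{1/2}$ does not yield the corresponding fixed-point equation for the $\Lambda^kA_j$. (Already for $m=2$, $k=n$, this would assert that the determinant of a sum is the sum of determinants.) This is precisely the obstruction the paper flags when it says that, unlike $G$, most of the other means do not behave well under tensor powers. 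The second gap is that you never actually prove the scalar inequality $\lambda_1(L)\le\lambda_1(\Omega)$ on which your whole reduction rests: the paragraph devoted to it cycles through several approaches, each of which you yourself observe fails (monotonicity of $\Omega$ fails, the harmonic-mean lower bound fails, $\Omega\le\sum_jw_jA_j$ points the wrong way), and ends with an unproved hope that a Golden--Thompson or Araki--Lieb--Thirring estimate will close the loop. So the core of $\lambda(L)\underset{w\log}{\prec}\lambda(\Omega)$ is missing.

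The idea you need is to encode $\Omega$ not through \eqref{eq3} directly but through the equivalent identity $\sum_{j}w_j(A_j\#\Omega^{-1})=I$ (equation \eqref{eq34}), because the two-variable geometric mean $\#$ \emph{does} commute with $\Lambda^k$. Operator concavity of $t\mapsto t^p$ for $0<p<1$ gives $\sum_jw_j(A_j\#\Omega^{-1})^p\le I$, the arithmetic--geometric mean inequality replaces the sum by the Cartan mean $G$, and one then passes to antisymmetric powers of that $G$-inequality. Under the hypothesis $\Lambda^k\Omega\le I$ one has $\Lambda^kA_j\#\Lambda^k\Omega^{-1}\ge\Lambda^kA_j^{1/2}$, so monotonicity of $G$ and the L\"owner--Heinz inequality yield $\Lambda^kG(w;A_1^{p/2},\dots,A_m^{p/2})^{2/p}\le I$; letting $p\to0$ and using $G(w;A_1^p,\dots,A_m^p)^{1/p}\to L$ gives $\Lambda^kL\le I$, which is the partial-product bound. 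As for the first half, citing Hiai--Petz is legitimate, but the mechanism you describe (limit of power means plus Araki--Lieb--Thirring) is not the one that works; the actual proof combines the Ando--Hiai--Yamazaki inequality $G(w;A_1^p,\dots,A_m^p)\le G(w;A_1,\dots,A_m)$ under $G\le I$ with the same limit formula $G(w;A_1^p,\dots,A_m^p)^{1/p}\to L$, and the determinant identity $\det G=\det L$ upgrades weak log-majorisation to log-majorisation there.
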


There is another family of means that is important in this context.
The \emph{$p$-th power mean}  of $A_{1},\dots,A_{m}$ is defined as
\begin{equation}
Q_p( w ;A_{1},\dots,A_{m})=\left(\sum_{j=1}^{m}w_{j}A_{j}^p\right)^{1/p},
\ \ \ -\infty<p<\infty.\label{eq20}
\end{equation}
For $p=0,$ this quantity  is to be interpreted as a limit. It was
shown by Bhagwat and Subramanian \cite{bs} that
\begin{equation}
{\underset{p\to
0^{\pm}}{\lim}}Q_p(w;A_1,\ldots,A_m)=L(w;A_1,\ldots,A_m).
\label{eq21}
\end{equation}

Of special interest to us here is the case $p=1/2.$ When
$A_{1},\dots,A_{m}$ commute we have

\begin{equation}
\Omega( w ;A_{1},\dots,A_{m})=\left(\sum_{j=1}^mw_jA_{j}^{1/2}\right)^2=Q_{1/2}(w;A_1,\ldots,A_m).
\label{eq22}
\end{equation}
Thus $\Omega$ and $Q_{1/2}$ can be regarded as two different
noncommutative extensions of the same object, and for this reason it
is natural to ask for comparisons between them.

We remark here that just as we have realized the means $    \Omega,G$ and
$L$ as solutions of least square problems for certain metrics on
${\Bbb P},$ so can be done for $Q_{1/2}.$  The Hellinger distance or
the Bhattacharya distance  between probability vectors
 $p = (p_1,\ldots, p_n)$ and $q = (q_1,\ldots, q_n)$ is defined as
\begin{equation*}
d(p,q) =||p^{1/2}-q^{1/2}||_{2}= \left[\sum_{i=1}^{n}
\left(\sqrt{p_i}-\sqrt{q_i}\right)^2 \right ]^{1/2}. \label{eq5a}
\end{equation*}
A straightforward extension to positive definite matrices is the
distance
\begin{equation}
d_H(A,B) =||A^{1/2}-B^{1/2}||_{2}=
\left[\tr\,(A+B)-2\tr\,A^{1/2}B^{1/2}\right ]^{1/2}. \label{eq23}
\end{equation}
Compare this with the Bures-Wasserstein distance (\ref{eq1}). It can
be seen that the solution to the least squares problem (\ref{eq2})
when $d$ is replaced by $d_H$ is the mean
$Q_{1/2}( w ;A_{1},\dots,A_{m}).$

For the comparison between the means $Q_{1/2}$ and $\Omega$ we have
the following result.

\begin{thm}\label{thm2}
For all positive definite matrices $A_{1},\dots,A_{m}$ and weights
$w_{1},\dots,w_{m}$, we have
\begin{equation}
||Q_{1/2}( w ;A_{1},\dots,A_{m})||_{p}\leq
||\Omega( w ;A_{1},\dots,A_{m})||_{p}, \label{eq24}
\end{equation}
for the Schatten $p$-norms with $p=1$ and $\infty.$ In the case
$m=2,$ the inequality $(\ref{eq24})$ holds also with $p=2.$
\end{thm}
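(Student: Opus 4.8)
The plan is to treat the three assertions — $p=1$ and $p=\infty$ for general $m$, and $p=2$ for $m=2$ — separately; the first two have clean structural proofs, while the last requires a genuine matrix-trace estimate, which I expect to be the main obstacle.

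\emph{The case $p=1$.} Here I would use the variational description (\ref{eq2}). Squaring (\ref{eq1}) gives $d^2(X,A_j)=\tr X+\tr A_j-2\,\tr(X^{1/2}A_jX^{1/2})^{1/2}$, so the functional minimized in (\ref{eq2}) is $f(X)=\tr X+c-2\sum_j w_j\,\tr(X^{1/2}A_jX^{1/2})^{1/2}$, where $c=\sum_j w_j\tr A_j$. Taking the trace in (\ref{eq3}) gives $\sum_j w_j\,\tr(\Omega^{1/2}A_j\Omega^{1/2})^{1/2}=\tr\Omega$, hence $f(\Omega)=c-\tr\Omega$. Now evaluate $f$ at $X=Q_{1/2}$. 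Since $Q_{1/2}^{1/2}=S:=\sum_j w_jA_j^{1/2}$ is positive definite, $\tr(Q_{1/2}^{1/2}A_jQ_{1/2}^{1/2})^{1/2}=\tr(SA_jS)^{1/2}=\|SA_j^{1/2}\|_1\ge|\tr(SA_j^{1/2})|=\tr(S^{1/2}A_j^{1/2}S^{1/2})$, using $|\tr M|\le\|M\|_1$ and the positivity of $S^{1/2}A_j^{1/2}S^{1/2}$. Summing over $j$, $\sum_j w_j\,\tr(Q_{1/2}^{1/2}A_jQ_{1/2}^{1/2})^{1/2}\ge\tr\big(S\sum_j w_jA_j^{1/2}\big)=\tr S^2=\tr Q_{1/2}$, so $f(Q_{1/2})\le c-\tr Q_{1/2}$. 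Since $f(Q_{1/2})\ge f(\Omega)=c-\tr\Omega$, this gives $\tr\Omega\ge\tr Q_{1/2}$.

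\emph{The case $p=\infty$.} I would first rewrite (\ref{eq3}) in ``optimal transport'' form: conjugating (\ref{eq3}) by $\Omega^{-1/2}$ and using the identity $\Omega^{-1/2}(\Omega^{1/2}A_j\Omega^{1/2})^{1/2}\Omega^{-1/2}=\Omega^{-1}\#A_j$ turns it into $\sum_j w_j(\Omega^{-1}\#A_j)=I$. From $\Omega\le\|\Omega\|_\infty I$ we get $\Omega^{-1}\ge\|\Omega\|_\infty^{-1}I$, and the monotonicity of the geometric mean together with $(cI)\#A_j=c^{1/2}A_j^{1/2}$ gives $\Omega^{-1}\#A_j\ge\|\Omega\|_\infty^{-1/2}A_j^{1/2}$. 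Summing, $I\ge\|\Omega\|_\infty^{-1/2}\sum_j w_jA_j^{1/2}=\|\Omega\|_\infty^{-1/2}Q_{1/2}^{1/2}$, that is, $Q_{1/2}^{1/2}\le\|\Omega\|_\infty^{1/2}I$; taking operator norms and squaring, $\|Q_{1/2}\|_\infty\le\|\Omega\|_\infty$.

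\emph{The case $m=2$, $p=2$.} Write $(A_1,A_2)=(A,B)$, $w=(1-t,t)$, and set $R=(AB)^{1/2}+(BA)^{1/2}$, $T=A^{1/2}B^{1/2}+B^{1/2}A^{1/2}$ (both Hermitian, since $(BA)^{1/2}=((AB)^{1/2})^{*}$), $D=R-T$, and $W=(1-t)^2A+t^2B$. By (\ref{eq4}) and $Q_{1/2}=((1-t)A^{1/2}+tB^{1/2})^2$ we have $\Omega=W+t(1-t)R$ and $Q_{1/2}=W+t(1-t)T$, whence a direct expansion gives
\[
\tr\Omega^2-\tr Q_{1/2}^2=2(1-t)^3t\,\tr(AD)+2(1-t)t^3\,\tr(BD)+(1-t)^2t^2\,(\tr R^2-\tr T^2).
\]
So $\|Q_{1/2}\|_2\le\|\Omega\|_2$ would follow from $\tr R^2\ge\tr T^2$ and $\tr(AD)\ge0$ (the inequality $\tr(BD)\ge0$ being the same with $A$ and $B$ interchanged). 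For the first, a computation gives $\tr R^2-\tr T^2=2\big(\|(AB)^{1/2}\|_2^2-\tr\big((A^{1/2}B^{1/2})^2\big)\big)$, and both $\|(AB)^{1/2}\|_2^2\ge\tr(AB)$ and $\tr\big((A^{1/2}B^{1/2})^2\big)\le\tr(AB)$ follow from Schur's inequality $\sum_i|\lambda_i(M)|^2\le\|M\|_2^2$ (applied to $M=(AB)^{1/2}$ and to $M=A^{1/2}B^{1/2}$, whose eigenvalues are positive and the sum of whose squares is $\tr(AB)$). For the second, the identity $(AB)^{1/2}=A^{1/2}(A^{1/2}BA^{1/2})^{1/2}A^{-1/2}$ together with trace cyclicity yields $\tr(AD)=2\big(\tr(A(A^{1/2}BA^{1/2})^{1/2})-\tr(A^{3/2}B^{1/2})\big)$, so one must prove
\[
\tr\big(A(A^{1/2}BA^{1/2})^{1/2}\big)\ \ge\ \tr\big(A^{3/2}B^{1/2}\big).
\]
This is where I expect the main difficulty to lie: the crude estimate $|\tr M|\le\|M\|_1$ is not enough (both sides are bounded by $\|A(A^{1/2}BA^{1/2})^{1/2}\|_1$), so the proof must exploit the precise relation between $(AB)^{1/2}$ and $A^{1/2}B^{1/2}$ — through their polar decompositions, von Neumann's trace inequality, or operator concavity of $t\mapsto t^{1/2}$ — after normalizing, say, $A$ or $B$ by a scalar.
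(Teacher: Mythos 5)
Your reduction of the case $m=2$, $p=2$ is correct and is in fact the same reduction the paper performs: after expanding $\tr\,\Omega^2-\tr\,Q_{1/2}^2$ one needs exactly the two inequalities $\tr\, (A^{1/2}B^{1/2})^2\leq \tr\, (AB)^{1/2}(BA)^{1/2}$ and $\tr\, A^{3/2}B^{1/2}\leq \tr\, A(AB)^{1/2}$ (the paper's Proposition 5). Your proof of the first one via Schur's inequality applied to $(AB)^{1/2}$ and to $A^{1/2}B^{1/2}$ is sound and is essentially the paper's chain $\tr\,(A^{1/2}B^{1/2})^2\leq\tr\,AB\leq\tr\,(AB)^{1/2}(BA)^{1/2}$. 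But the second inequality, $\tr\,\bigl(A(A^{1/2}BA^{1/2})^{1/2}\bigr)\geq\tr\,\bigl(A^{3/2}B^{1/2}\bigr)$, is where your argument stops: you name it as the difficulty and list candidate techniques (polar decomposition, von Neumann, operator concavity), none of which is known to close it. This is a genuine gap, and it is the real content of the $p=2$ case. The paper's route is to prove the stronger log majorisation $\lambda(A^{3/4}B^{1/2}A^{3/4})\underset{\log}{\prec}\lambda(A^{1/2}(A^{1/2}BA^{1/2})^{1/2}A^{1/2})$; by homogeneity the top-eigenvalue inequality reduces to the implication $(A^{1/2}BA^{1/2})^{1/2}\leq A^{-1}\Rightarrow B^{1/2}\leq A^{-3/2}$, which is precisely Furuta's inequality $X\geq Y\geq 0\Rightarrow (X^{p+2r})^{1/p}\geq (X^rY^pX^r)^{1/p}$ with $X=A^{-1}$, $Y=(A^{1/2}BA^{1/2})^{1/2}$, $p=2$, $r=1/2$, and antisymmetric tensor powers then give the full log majorisation. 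Without Furuta (or an equivalent), the $p=2$ claim is not established.

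The other two cases are fine. Your $p=\infty$ argument is essentially the paper's: both rest on rewriting the fixed-point equation as $\sum_j w_j(A_j\#\Omega^{-1})=I$ and using monotonicity of $\#$. Your $p=1$ argument, however, is correct and genuinely different from the paper's: the paper invokes the fixed-point iteration $S_{k+1}=K(S_k)$ of Alvarez-Esteban et al., along which the trace increases to $\tr\,\Omega$, and notes $K(I)=Q_{1/2}$; you instead compare the values of the Wasserstein least-squares functional at $\Omega$ and at $Q_{1/2}$, using only $\tr\,(SA_jS)^{1/2}=||SA_j^{1/2}||_1\geq\tr\,(SA_j^{1/2})$ and the minimality of $\Omega$. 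That version is self-contained and avoids the external convergence theorem, which is a genuine advantage.
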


Theorem \ref{thm2} establishes a part of the following:
\vspace{2mm}

 \noindent
 \textbf{Conjecture 1} The inequality (\ref{eq24}) is valid, more
 generally, for all unitarily invariant norms. In other words, we
 have the weak majorisation
 \begin{equation}
\lambda(Q_{1/2})\underset{w}{\prec} \lambda(\Omega).\label{eq25}
\end{equation}

We remark that when $m=2,$ and $(w_{1},w_{2})=(1/2, 1/2)$ the
conjecture says that
\begin{equation}
|||A+B+A^{1/2}B^{1/2}+B^{1/2}A^{1/2}|||\leq
|||A+B+(AB)^{1/2}+(BA)^{1/2}|||,\label{eq26}
\end{equation}
for every unitarily invariant norm, and Theorem \ref{thm2} includes
the statement that this is true for the $p$-norms with
$p=1,2,\infty.$ There is a parallel here with results proved in
\cite{bly}  which we discuss later in Section 3.

\section{Proofs}
The first majorisation in (\ref{eq19}) is mentioned as a remark at
the end of the paper \cite{hp}. Some of the ideas needed for the
proofs of the two majorisations are same. Therefore, for the reader's
convenience we include here proofs of both the majorisations in
(\ref{eq19}) and also of the propositions that go into them (with
some simplifications). The main ideas originate in the work of the
Japanese school beginning with T. Ando \cite{a}, followed by T. Ando
and Hiai \cite{ah}, and then by J. I. Fujii, M. Fujii, Y. Seo
\cite{ffs} and T. Yamazaki \cite{y}.

The function $f(A)=A^p$ on positive definite matrices is operator
convex if $1\leq p\leq 2,$ and operator concave if $0\leq p\leq 1.$
Operator convexity and concavity are characterized by Jensen type
inequalities called Hansen's inequalities \cite{han} and
\cite[Theorem 2.1]{HP}, which say that for every contraction $X$ we
have

 \begin{equation}
(X^*AX)^p\leq X^*A^pX, \qquad {\mathrm{if}}\ 1\leq p\leq 2,
\label{eq27}
 \end{equation}
and
 \begin{equation}
(X^*AX)^p\geq X^*A^pX, \qquad {\mathrm{if}}\ 0\leq p\leq 1.
\label{eq28}
 \end{equation}

 A standard technique in proving log majorisations like (\ref{eq19})
 is the use of antisymmetric tensor powers. This is so because if
 $\Lambda^kA$ denotes the $k$th antisymmetric tensor power of $A,$
 then
\begin{equation*}
\prod^{k}_{j=1}\lambda_j^{\downarrow}(A)=\lambda_1^{\downarrow}(\Lambda^kA),\qquad
1\leq k\leq n.
\end{equation*}
The map $A\mapsto \Lambda^kA$ is multiplicative, i.e.,
$\Lambda^k(AB)=(\Lambda^kA)(\Lambda^kB)$ and
$\Lambda^{k}A^p=(\Lambda^kA)^p, p\in (-\infty,\infty).$ So the
geometric mean $G$ is well disposed towards tensor powers. It is
clear from (\ref{eq9}) that
$\Lambda^k(A\#_tB)=(\Lambda^kA)\#_t(\Lambda^kB).$ The same property
holds for the several variable geometric mean
\begin{equation}
\Lambda^kG( w ;A_{1},\dots,A_{m})=G( w ;\Lambda^kA_1,\dots,\Lambda^kA_{m}).
\label{eq29}
\end{equation}
See \cite{bk} for a proof. Most of the other means do not behave as
well with respect to tensor powers. For the log Euclidean mean this
difficulty is circumvented through the following ingenious
proposition, proved by Ando and Hiai \cite{ah} for two variables and
extended by Fujii et al \cite{ffs} to several variables.

\begin{prop} We have
\begin{equation}
{\underset{p\to
0^+}{\lim}}G( w ;A_{1}^p,\dots,A_{m}^p)^{1/p}=L( w ;A_{1},\dots,A_{m}).\label{eq30}
\end{equation}
\end{prop}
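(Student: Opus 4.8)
The plan is to combine the characterisation of the Cartan mean as the unique positive definite solution of the Karcher equation~(\ref{eq8}) with the Lie--Trotter product formula. Write $G_p=G(w;A_1^p,\dots,A_m^p)$ and set $X_p=G_p^{1/p}$; the goal is to prove that $X_p\to L$ as $p\to 0^+$. Since $\log\colon\mathbb P\to\{\text{Hermitian matrices}\}$ is a homeomorphism, it suffices to show that $X_p$ stays in a compact subset of $\mathbb P$ and that every subsequential limit $X_*$ satisfies $\log X_*=\sum_{j=1}^m w_j\log A_j$.

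First I would establish that $\{X_p:0<p\le 1\}$ is relatively compact in $\mathbb P$, i.e.\ bounded above and bounded away from $0$. Applying the arithmetic--geometric--harmonic inequality~(\ref{eq11}) to the matrices $A_1^p,\dots,A_m^p$ gives
\begin{equation*}
\Bigl(\sum_{j=1}^m w_jA_j^{-p}\Bigr)^{-1}\le G_p\le \sum_{j=1}^m w_jA_j^{p}.
\end{equation*}
Because the spectral norm of a positive matrix raised to a positive power equals the corresponding power of the norm, the upper bound yields $\|X_p\|=\|G_p\|^{1/p}\le\|\sum_j w_jA_j^p\|^{1/p}=\|Q_p(w;A_1,\dots,A_m)\|$, and the lower bound, after taking inverses, yields $\|X_p^{-1}\|\le\|Q_p(w;A_1^{-1},\dots,A_m^{-1})\|$. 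By the Bhagwat--Subramanian limit~(\ref{eq21}) the right-hand sides converge as $p\to 0^+$ to $\|L\|$ and $\|L^{-1}\|$ respectively, hence remain bounded on $(0,1]$. Thus all the $X_p$ lie in a fixed compact set $\mathcal K\subset\mathbb P$ (we may also enlarge $\mathcal K$ to contain $A_1,\dots,A_m$).

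Next, $G_p$ satisfies~(\ref{eq8}) for the data $A_j^p$; substituting $G_p=X_p^p$ and dividing by $p$ this becomes
\begin{equation*}
\sum_{j=1}^m w_j\,\frac1p\log\!\left(X_p^{-p/2}A_j^pX_p^{-p/2}\right)=0 .
\end{equation*}
The analytic heart of the argument is the uniform expansion $X^{-p/2}A^pX^{-p/2}=I+p(\log A-\log X)+R(p,X,A)$ with $\|R(p,X,A)\|\le C p^2$ for $X,A\in\mathcal K$ and $|p|\le\tfrac12$, obtained by multiplying out the exponential series for $X^{-p/2}$ and $A^p$; this is the quantitative form of the Lie--Trotter formula and gives $\frac1p\log(X^{-p/2}A^pX^{-p/2})\to\log A-\log X$ as $p\to 0^+$, uniformly for $X,A$ in $\mathcal K$. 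Now pick any sequence $p_k\to 0^+$; by relative compactness pass to a subsequence along which $X_{p_k}\to X_*\in\mathcal K$. Inserting $X=X_{p_k}$, $A=A_j$, $p=p_k$ into the expansion and using the continuity of $\log$, the displayed equation passes to the limit and yields $\sum_j w_j(\log A_j-\log X_*)=0$, i.e.\ $\log X_*=\sum_j w_j\log A_j$, so $X_*=L$. Since every subsequential limit of $X_p$ is $L$, we conclude $X_p\to L$, which is~(\ref{eq30}).

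I expect the main obstacle to be exactly the passage to the limit in the Karcher equation: the matrix $X_p$ sitting inside the equation varies with $p$, so one cannot naively move the limit under the sum. This is what forces both the precompactness step and the requirement that the Lie--Trotter convergence be established \emph{uniformly} over the compact family $\mathcal K$ rather than merely pointwise. Checking that the remainder $R(p,X,A)$ is genuinely $O(p^2)$ uniformly on $\mathcal K$ — for instance via the integral form of Taylor's theorem applied to the entire functions $p\mapsto A^p$ and $p\mapsto X^{-p/2}$, with bounds in terms of $\sup_{X\in\mathcal K}\|\log X\|$ — is the one place calling for a little care; the rest of the argument is soft.
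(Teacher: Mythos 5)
Your argument is correct, but it takes a genuinely different route from the paper. The paper's proof uses the arithmetic--geometric--harmonic inequality (\ref{eq11}) for the full force of a sandwich: taking logarithms (operator monotonicity of $\log$), dividing by $p$, and invoking the Bhagwat--Subramanian limit (\ref{eq21}) squeezes $\log G(w;A_1^p,\dots,A_m^p)^{1/p}$ between two quantities that both converge to $\log L$; since for Hermitian matrices $M_p\le N_p\le K_p$ with $M_p,K_p\to C$ forces $N_p\to C$, the result follows in a few lines with no compactness or subsequence extraction. You use (\ref{eq11}) only to secure precompactness of $\{X_p\}$, and then identify every subsequential limit by passing to the limit in the Karcher equation (\ref{eq8}) via a uniform Lie--Trotter expansion. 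Both steps of yours check out: the norm bounds $\|X_p\|\le\|Q_p(w;A_1,\dots,A_m)\|$ and $\|X_p^{-1}\|\le\|Q_p(w;A_1^{-1},\dots,A_m^{-1})\|$ do follow from Weyl monotonicity of the top eigenvalue together with $\lambda_1(S^{1/p})=\lambda_1(S)^{1/p}$, and the $O(p^2)$ remainder is indeed uniform once $\sup_{\mathcal K}\|\log X\|$ is controlled, so the limit equation $\sum_j w_j(\log A_j-\log X_*)=0$ is legitimate. What each approach buys: the paper's sandwich is shorter, softer, and avoids all quantitative estimates; your argument is the standard ``precompactness plus identification of the limit through the defining equation'' template (the same pattern used by Lim and P\'alfia to prove (\ref{eq48})), so it is more robust and would adapt to means defined only implicitly by an equation, at the cost of having to verify the uniformity of the Lie--Trotter remainder, which you correctly flag as the delicate point.
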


\begin{proof}
By the arithmetic-geometric-harmonic mean inequalities (\ref{eq11}),
we have for every $p>0$
$$\left(\sum_{j=1}^{m}w_{j}A_{j}^{-p}\right)^{-1}\leq
G( w ;A_{1}^p,\dots,A_{m}^p)\leq \sum_{j=1}^{m}w_{j}A_{j}^p.$$
Take logarithms and use the fact that $\log$ is an operator monotone
function to get
\begin{eqnarray*}
-\log\left(\sum_{j=1}^mw_{j}A_{j}^{-p}\right)&\leq&\log
G( w ;A_{1}^p,\dots,A_{m}^p)\leq\log \left(
\sum_{j=1}^{m}w_{j}A_{j}^p\right).
\end{eqnarray*}
Multiplying by $1/p$ we get
\begin{eqnarray*}
\log\left(\sum_{j=1}^mw_{j}A_{j}^{-p}\right)^{-1/p}&\leq&\log
G( w ;A_{1}^p,\dots,A_{m}^p)^{1/p}\leq\log \left(
\sum_{j=1}^{m}w_{j}A_{j}^p\right)^{1/p}.
\end{eqnarray*}
Now take the limit as $p\to 0^+$ and use the result of Bhagwat and
Subramanian (\ref{eq21}) to obtain (\ref{eq30}).
\end{proof}

Using the equation (\ref{eq8}) that characterizes $G$, we see that
$G( w ;A_{1},\dots,A_{m})=I\Longleftrightarrow
\sum_{j=1}^mw_{j}\log A_{j}=0\Longleftrightarrow
\sum_{j=1}^mw_{j}\log A_{j}^p=0$ for all $p>0\Longleftrightarrow
G( w ;A_{1}^p,\dots,A_{m}^p)=I$ for all $p>0.$ The following
proposition handles the case $G\leq I.$ It was proved for two
variables by  Ando \cite{a} and for general $m$ by Yamazaki
\cite{y}.

\begin{prop} Suppose $G( w ;A_{1},\dots,A_{m})\leq I.$ Then
\begin{equation}
G( w ;A_{1}^p,\dots,A_{m}^p)\leq G( w ;A_{1},\dots,A_{m}),
\qquad 1\leq p<\infty, \label{eq31}
\end{equation} and
\begin{equation}
G( w ;A_{1}^p,\dots,A_{m}^p)\geq G( w ;A_{1},\dots,A_{m}),
\qquad 0\leq p\leq 1. \label{eq32}
\end{equation}
\end{prop}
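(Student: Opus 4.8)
The proof will turn on the characterising equation $(\ref{eq8})$ for $G$, together with a comparison (super-/sub-solution) principle for it: \emph{if a positive definite $X$ satisfies $\sum_j w_j\log(X^{-1/2}C_jX^{-1/2})\le 0$, then $X\ge G(w;C_1,\dots,C_m)$; and if it satisfies $\ge 0$, then $X\le G(w;C_1,\dots,C_m)$.} Using the congruence identity $G(w;T^*C_1T,\dots,T^*C_mT)=T^*G(w;C_1,\dots,C_m)T$ for invertible $T$, which itself follows from the uniqueness in $(\ref{eq8})$, this principle reduces to its normalisation $X=I$, i.e.\ to the assertion that $\sum_j w_j\log C_j\le 0$ forces $G(w;C_1,\dots,C_m)\le I$, together with the dual $\sum_j w_j\log C_j\ge 0\Rightarrow G(w;C_1,\dots,C_m)\ge I$. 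These are available from the literature on the Karcher equation (e.g.\ \cite{ll,bk}); alternatively they follow from the first log-majorisation of Theorem \ref{thm1}, since $\sum_j w_j\log C_j\le 0$ (resp.\ $\ge 0$) is exactly $L(w;C_1,\dots,C_m)\le I$ (resp.\ $\ge I$), and $\lambda(G)\underset{\log}{\prec}\lambda(L)$ forces the largest eigenvalue of $G$ to lie below, and the smallest above, the corresponding eigenvalue of $L$.

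Granting this, set $Z=G(w;A_1,\dots,A_m)$ and assume $Z\le I$. Put $D_j=Z^{-1/2}A_jZ^{-1/2}$, so that $(\ref{eq8})$ at $X=Z$ reads $\sum_j w_j\log D_j=0$ and $A_j=Z^{1/2}D_jZ^{1/2}$. By the comparison principle applied with $C_j=A_j^p$ and $X=Z$, to prove $(\ref{eq31})$ it suffices to show $\sum_j w_j\log(Z^{-1/2}A_j^pZ^{-1/2})\le 0$, and to prove $(\ref{eq32})$ it suffices to show the reverse inequality. Now $Z\le I$ makes $Z^{1/2}$ a contraction, so Hansen's inequalities apply. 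For $1\le p\le 2$, inequality $(\ref{eq27})$ gives $A_j^p=(Z^{1/2}D_jZ^{1/2})^p\le Z^{1/2}D_j^pZ^{1/2}$, hence $Z^{-1/2}A_j^pZ^{-1/2}\le D_j^p$; applying the operator monotone function $\log$ and summing with the weights yields $\sum_j w_j\log(Z^{-1/2}A_j^pZ^{-1/2})\le p\sum_j w_j\log D_j=0$, which is $(\ref{eq31})$ for $1\le p\le 2$. Symmetrically, for $0\le p\le 1$, inequality $(\ref{eq28})$ gives $A_j^p\ge Z^{1/2}D_j^pZ^{1/2}$, hence $Z^{-1/2}A_j^pZ^{-1/2}\ge D_j^p$, and the same passage to logarithms gives $\sum_j w_j\log(Z^{-1/2}A_j^pZ^{-1/2})\ge 0$; this settles $(\ref{eq32})$ for \emph{every} $p\in[0,1]$ at once, because $t\mapsto t^p$ is operator concave on the whole interval $[0,1]$ and no iteration is needed.

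It remains only to pass from $1\le p\le 2$ to all $p\ge 1$ in $(\ref{eq31})$. The inequality just obtained also shows $G(w;A_1^p,\dots,A_m^p)\le Z\le I$ for $1\le p\le 2$, so the hypothesis of the proposition is inherited by $A_1^p,\dots,A_m^p$; applying the case already established to these matrices and to an exponent $s\in[1,2]$ gives $G(w;A_1^{ps},\dots,A_m^{ps})\le G(w;A_1^p,\dots,A_m^p)\le Z$, which extends $(\ref{eq31})$ to $p\in[1,4]$, and repeating the doubling reaches every $p\in[1,\infty)$. This Ando--Hiai bootstrap is what makes the range $1\le p\le 2$ of operator convexity of $t\mapsto t^p$ suffice. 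The one genuinely substantive ingredient here is the comparison principle for $(\ref{eq8})$; once that is in place, what remains is the short computation with Hansen's inequalities above and this routine doubling. The delicate points are to keep the comparison principle a true L\"owner-order statement and to invoke it with the correct sign in $(\ref{eq31})$ versus $(\ref{eq32})$ — which is the reason for isolating it, in its $X=I$ normalisation, at the outset.
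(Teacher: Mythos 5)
Your argument has one genuine gap, and it sits exactly where you place all the weight: the ``comparison principle'' asserting that $\sum_j w_j\log(X^{-1/2}C_jX^{-1/2})\le 0$ forces $X\ge G(w;C_1,\dots,C_m)$ (and dually). In its normalised form this says $L(w;C_1,\dots,C_m)\le I\Rightarrow G(w;C_1,\dots,C_m)\le I$, which by homogeneity is precisely the statement $\lambda_1^{\downarrow}(G)\le\lambda_1^{\downarrow}(L)$, i.e.\ the $k=1$ case of the first log-majorisation in Theorem \ref{thm1}. But that majorisation is \emph{derived from} the present proposition (together with the limit formula (\ref{eq30})) in this paper and in the standard Ando--Hiai--Yamazaki development, so your fallback justification is circular. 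Your other justification, the citation to \cite{ll,bk}, does not hold up either: those papers supply monotonicity, congruence invariance, the arithmetic--geometric--harmonic inequalities and compatibility with tensor powers, not a sub/super-solution principle for the Karcher equation; and such a principle does not follow from monotonicity in any easy way, since one cannot in general replace $C_1$ by $\exp(\log C_1+S)$ with $S\ge 0$ and remain above $C_1$ in the L\"owner order ($\exp$ is not operator monotone).

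The good news is that the logarithmic detour which creates the need for this principle is unnecessary. Having obtained $Z^{-1/2}A_j^pZ^{-1/2}\le D_j^p$ from Hansen's inequality, do not take logs: observe instead that $\sum_j w_j\log D_j^p=p\sum_j w_j\log D_j=0$, so $G(w;D_1^p,\dots,D_m^p)=I$ \emph{exactly} (this is the remark preceding the proposition, read off directly from the characterising equation (\ref{eq8})), and then the monotonicity of $G$ in its arguments gives $G(w;Z^{-1/2}A_1^pZ^{-1/2},\dots,Z^{-1/2}A_m^pZ^{-1/2})\le I$, hence $G(w;A_1^p,\dots,A_m^p)\le Z$ by congruence invariance. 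With that repair your proof becomes essentially the paper's: the paper normalises at $X=G\ge I$ so that $X^{-1/2}$ is the contraction and then passes to $G\le I$ by inverting all the matrices, whereas you work directly at $Z\le I$ with $Z^{1/2}$ as the contraction, a harmless variant. Your one-step treatment of $0\le p\le 1$ and the doubling bootstrap for $p\ge 2$ are both correct and match the paper.
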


\begin{proof} Let $A_{1},\dots,A_m\in {\Bbb P}$, and let $X=G( w ;A_{1},\dots,A_{m}).$
By the congruence invariance property of $G$
$$G( w ;X^{-1/2}A_{1}X^{-1/2},\dots,X^{-1/2}A_{m}X^{-1/2})=I.$$
So, by the remark preceding Proposition 4
$$G( w ;(X^{-1/2}A_{1}X^{-1/2})^p,\dots,(X^{-1/2}A_{m}X^{-1/2})^p)=I$$
for all $p>0.$

If $X\geq I,$ then $X^{-1/2}\leq I.$Then by Hansen's inequality
(\ref{eq27}), for $1\leq p\leq 2$ we have
$\left(X^{-1/2}A_{j}X^{-1/2}\right)^p\leq X^{-1/2}A_{j}^pX^{-1/2}.$
The monotonicity property of $G$ then gives
$$I\leq
G( w ;X^{-1/2}A_{1}^pX^{-1/2},\dots,X^{-1/2}A_{m}^pX^{-1/2}),$$
and the congruence invariance of $G$ shows that
$$X\leq G( w ;A_{1}^p,\dots,A_{m}^p).$$ We have shown that if
$G( w ;A_{1},\dots,A_{m})\geq I,$ then
$G( w ;A_{1},\dots,A_{m})\leq G( w ;A_{1}^p,\dots,A_{m}^p)$
for $1\leq p\leq 2.$

If $G( w ;A_{1},\dots,A_{m})\leq I,$ then
$G( w ;A_{1}^{-1},\dots,A_{m}^{-1})\geq I.$ So, by what we have
proved $G( w ;A_{1}^{-1},\dots,A_{m}^{-1})\leq
G( w ;A_{1}^{-p},\dots,A_{m}^{-p})$ for $1\leq p\leq 2.$
Inverting again we get the inequality (\ref{eq31}) for $1\leq p\leq
2.$ Using this the validity of the inequality can be established for
$2\leq p\leq 4,$ and successively for intervals beyond. The same
arguments can be used to prove (\ref{eq32}).
\end{proof}

We now turn to the proof of Theorem 1. All the means under
consideration here (arithmetic, geometric, harmonic, log Euclidean,
Wasserstein, and the $p$-th power mean) are homogeneous in variables
$A_{j};$ i.e., for all $\alpha>0$ we have
\begin{equation}
G( w ; \alpha A_{1},\dots, \alpha A_{m})=\alpha
G( w ;A_{1},\dots,A_{m}), \label{eq33}
\end{equation} etc. So, to prove an inequality like
$\lambda_1^{\downarrow}(G)\leq \lambda_1^{\downarrow}(L)$ it
suffices to show that $G\leq I$ whenever $L\leq I.$

Let $0<p<1$ and suppose
$$G( w ;A_{1}^p,\dots,A_{m}^p)^{1/p}\leq I.$$ Then
$$G( w ;A_{1}^p,\dots,A_{m}^p)\leq I.$$
It follows from Proposition 4 that
$$G( w ;A_{1},\dots,A_{m})=G( w ;(A_{1}^p)^{1/p},\dots,(A_{m}^p)^{1/p})\leq
G( w ;A_{1}^p,\dots,A_{m}^p)\leq I.$$ Hence, for all $0<p<1$ we
have
$$\lambda_1^{\downarrow}(G( w ;A_{1},\dots,A_{m}))\leq \lambda_1^{\downarrow}(G( w ;A_{1}^p,\dots,A_{m}^{p})^{1/p}).$$
Replacing $A_{j}$ by $\Lambda^k(A_{j})$ and using (\ref{eq29}) we
get
\begin{eqnarray*}
\prod_{j=1}^k\lambda_j^{\downarrow}(G( w ;A_{1},\dots,A_{m}))&=&\lambda_1^{\downarrow}(\Lambda^kG( w ;A_{1},\dots,A_{m}))\\
&=&\lambda_1^{\downarrow}(G( w ;\Lambda^kA_{1},\dots,\Lambda^kA_{m}))\\
&\leq&\lambda_1^{\downarrow}(G( w ;(\Lambda^kA_{1})^p,\dots,(\Lambda^kA_{m})^p)^{1/p})\\
&=&\lambda_1^{\downarrow}(G( w ;(\Lambda^kA_{1}^p),\dots,(\Lambda^kA_{m}^p))^{1/p})\\
&=&\lambda_1^{\downarrow}(\Lambda^kG( w ;A_{1}^p,\dots,A_{m}^p)^{1/p})\\
&=&\prod_{j=1}^k\lambda_j^{\downarrow}(G( w ;A_{1}^p,\dots,A_{m}^p)^{1/p})
\end{eqnarray*}
for all $1\leq k\leq n.$ Letting $p\to 0$ we get from Proposition 3
$$\prod_{j=1}^{k}\lambda_j^{\downarrow}(G)\leq
\prod_{j=1}^{k}\lambda_j^{\downarrow}(L), \qquad 1\leq k\leq n.$$
For $k=n$ there is equality here as
${\mathrm{det}}(G)={\mathrm{det}}(L).$ This proves the first
majorisation in (\ref{eq19}).

To prove the second we start with the equation
\begin{equation}
\sum_{j=1}^{m}w_j(A_{j}\#\Omega^{-1})=I, \label{eq34}
\end{equation} that the Wasserstein mean $\Omega$ satisfies. This
follows from (\ref{eq3}) and (\ref{eq10}). Using the fact that the
function $f(A)=A^p$ is operator concave for $0<p<1$ we obtain from
(\ref{eq34}) the inequality
\begin{equation*}
\sum_{j=1}^{m}w_j(A_{j}\#\Omega^{-1})^p\leq I, \qquad 0<p<1.
\end{equation*}
Using the arithmetic-geometric mean inequality (\ref{eq11}) we get
from this
\begin{equation*}G( w ;(A_{1}\#\Omega^{-1})^p,\dots,(A_{m}\#\Omega^{-1})^p)\leq
I,\qquad 0<p<1,
\end{equation*}
and then using \eqref{eq29} we get
\begin{equation}
G( w ; (\Lambda^kA_1\#\Lambda^k\Omega^{-1})^p,\ldots, (\Lambda^kA_m\#\Lambda^k\Omega^{-1})^p)\le I,\ 0<p<1.\label{eq35}
\end{equation}

Now suppose  $A_{1},\dots,A_m$ are such that $\Lambda^k\Omega\leq
I.$ Then $\Lambda^k\Omega^{-1}\geq I,$ and hence
$\Lambda^kA_{j}\#\Lambda^k\Omega^{-1}\geq \Lambda^kA_{j}\#
I=\Lambda^kA_{j}^{1/2}.$ Hence for all $0<p<1$ we have from
the L\"owner-Heinz inequality that
$$\Lambda^k(A_{j}\#\Omega^{-1})^p=\left(\Lambda^k(A_{j}\#\Omega^{-1})\right)^p=
(\Lambda^kA_{j}\#\Lambda^k\Omega^{-1})^p\geq
(\Lambda^kA_{j}^{1/2})^{p}=\Lambda^{k} A_{j}^{p/2}.$$ Together with
(\ref{eq35}) this gives
\begin{eqnarray*}
G( w ;\Lambda^kA_{1}^{p/2},\dots,\Lambda^kA_{m}^{p/2})&\leq&
\Lambda^kG( w ;(A_{1}\#\Omega^{-1})^p,\dots,(A_{m}\#\Omega^{-1})^p)\leq
I
\end{eqnarray*}
for all $0<p<1.$ In other words,
$$\Lambda^kG( w ;A_{1}^{p/2},\dots,A_{m}^{p/2})\leq
I, \qquad 0<p<1.$$ Raise both sides to their $2/p$ power to get
$$\Lambda^kG( w ;A_{1}^{p/2},\dots,A_{m}^{p/2})^{2/p}\leq
I, \qquad 0<p<1.$$ Now let $p\to 0$ and use Proposition 3. This
shows that $\Lambda^kL\leq I.$

We have shown that for $1\leq k\leq n,$ the condition
$\Lambda^k\Omega\leq I$ implies that $\Lambda^kL\leq I.$ From this
we conclude that
$$\lambda_1^{\downarrow}(\Lambda^kL)\leq
\lambda_1^{\downarrow}(\Lambda^k\Omega),$$ that is,
$$\prod_{j=1}^k\lambda_j^{\downarrow}(L)\leq
\prod_{j=1}^k\lambda_j^{\downarrow}(\Omega).$$ This proves the
second part of Theorem 1.

The easiest part of the proof of Theorem 2 is that of the case
$p=\infty.$ Suppose $\Omega\leq I.$ Then $I\leq \Omega^{-1}$, and by
the monotonicity property of the geometric mean
$$\sum_{j=1}^mw_{j}(A_{j}\#I)\leq
\sum_{j=1}^mw_{j}(A_{j}\#\Omega^{-1}).$$ The left hand side equals
$\sum_{j=1}^{m}w_jA_{j}^{1/2}$, and by (\ref{eq34}) the right hand
side equals $I.$ So the inequality says  $Q_{1/2}^{1/2}\leq I,$
and hence $Q_{1/2}\leq I.$ We have shown that $\Omega\leq I$ implies
that $Q_{1/2}\leq I.$ Hence $$||Q_{1/2}||_{\infty}\leq
||\Omega||_{\infty}.$$

The proof we offer for $p=1$ is more intricate. Given
$A_{1},\dots,A_{m}$ define for each $A\in {\Bbb P}$
\begin{equation}
K(A)=A^{-1/2}\left(\sum_{j=1}^{m}w_{j}(A^{1/2}A_{j}A^{1/2})^{1/2}\right)^2A^{-1/2}.\label{eq36}
\end{equation} It has been shown in \cite{abc} (see also Theorem 11
in \cite{BJL}) that for every $S_0\in {\Bbb P}$ the sequence
$S_{k+1}=K(S_{k})$ converge to $\Omega$ and ${\mathrm{tr}}S_{k}\leq
{\mathrm{tr}}S_{k+1}\leq {\mathrm{tr}}\Omega$ for all $k\geq 1.$ The
special choice $S_{0}=I$ gives $S_{1}=K(I)=Q_{1/2}.$ Hence
${\mathrm{tr}}Q_{1/2}\leq {\mathrm{tr}}\Omega,$ and therefore the
inequality (\ref{eq24}) is valid for $p=1.$

In the special case $m=2$ we have a simple proof. In this case
$ w =(1-t,t)$ for some $0<t<1,$
\begin{eqnarray}
Q_{1/2}( w ;A,B)&=&\left((1-t)A^{1/2}+tB^{1/2}\right)^2\nonumber\\
&=&(1-t)^2A+t^2B+(1-t)t(A^{1/2}B^{1/2}+B^{1/2}A^{1/2}),\label{eq37}
\end{eqnarray}
and $\Omega( w ;A,B)=A\lozenge_t B$ as given in (\ref{eq4}). So
the inequality $||Q_{1/2}||_{1}\leq ||\Omega||_1$ will be
established if we can show that
\begin{equation}
{\mathrm{tr}} A^{1/2}B^{1/2}\leq {\mathrm{tr}}
(AB)^{1/2}.\label{eq38}
\end{equation}
Since ${\mathrm{tr}} (AB)^{1/2}={\mathrm{tr}} A^{-1/2}(AB)^{1/2}A^{1/2}={\mathrm{tr}}
(A^{1/2}BA^{1/2})^{1/2},$ this inequality can also be stated as
$${\mathrm{tr}}\,  A^{1/4}B^{1/2}A^{1/4}\leq {\mathrm{tr}}\,  (A^{1/2}BA^{1/2})^{1/2}.$$
This follows from Theorem IX.2.10 in \cite{rbh}.

To prove the last statement of Theorem 2 we have to show that
\begin{equation}
||Q_{1/2}( w ;A,B)||_{2}^2\leq
||\Omega( w ;A,B)||_{2}^2.\label{eq39}
\end{equation}
Using (\ref{eq37}) and the cyclicity of trace, the left hand side of
(\ref{eq39}) is seen to be equal to
\begin{eqnarray*}
&{}&{\mathrm{tr}}\, [(1-t)^4A^2+4(1-t)^2t^2AB+t^4B^2\\
&{}&\,\,+4(1-t)^3tA^{3/2}B^{1/2}+4(1-t)t^3A^{1/2}B^{3/2}+2(1-t)^2t^2(A^{1/2}B^{1/2})^2].
\end{eqnarray*}
To expand the right hand side we use the expression (\ref{eq4}) for
$\Omega,$ cyclicity of trace, and the observation
\begin{eqnarray*}
{\mathrm{tr}}\,  A(AB)^{1/2}&=&{\mathrm{tr}}
AA^{-1/2}(AB)^{1/2}A^{1/2}\\
&=&{\mathrm{tr}}\,  A(A^{1/2}BA^{1/2})^{1/2}\\
&=& {\mathrm{tr}}\,  (A^{1/2}BA^{1/2})^{1/2}A={\mathrm{tr}}
(BA)^{1/2}A.
\end{eqnarray*}
A little calculation shows that the right hand side of (\ref{eq39})
is equal to
\begin{eqnarray*}
&{}&{\mathrm{tr}}\, [(1-t)^4A^2+4(1-t)^2t^2AB+t^4B^2\\
&{}&+4(1-t)^3tA(AB)^{1/2}+4(1-t)t^3B(BA)^{1/2}\\
&{}&+2(1-t)^2t^2(AB)^{1/2}(BA)^{1/2}].
\end{eqnarray*}
Thus to prove (\ref{eq31}) we need two inequalities that are of
independent interest and are stated in the following proposition.

\begin{prop}
For all positive definite matrices $A$ and $B$ we have
\begin{eqnarray}
&(i)& {\mathrm{tr}}\,  (A^{1/2}B^{1/2})^2\leq {\mathrm{tr}}\,
(AB)^{1/2}(BA)^{1/2},\label{eq40}\\
&(ii)& {\mathrm{tr}} \, A^{3/2}B^{1/2}\leq {\mathrm{tr}}\,
A(AB)^{1/2}.\label{eq41}
\end{eqnarray}
\end{prop}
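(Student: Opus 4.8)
The plan is to prove both trace inequalities by reducing them to statements about traces of products of positive semidefinite matrices, where one can exploit the elementary fact that for positive semidefinite $P,Q$ one has $0\le \tr\,PQ\le \tr\,P\,\|Q\|_\infty$ together with sharper two-matrix inequalities. For $(i)$, first I would rewrite the right-hand side: since $(AB)^{1/2}$ is similar to $(A^{1/2}BA^{1/2})^{1/2}$ via conjugation by $A^{1/2}$, write $\tr\,(AB)^{1/2}(BA)^{1/2}=\tr\,A^{-1/2}(A^{1/2}BA^{1/2})^{1/2}A^{1/2}\cdot B^{1/2}A^{1/2}B^{-1/2}(B^{1/2}AB^{1/2})^{1/2}$, which is not yet symmetric, so a cleaner route is to set $C=A^{1/2}$, $D=B^{1/2}$ and observe $\tr\,(A^{1/2}B^{1/2})^2=\tr\,CDCD=\tr\,(DC)^2$, while the right side is $\tr\,(C^2D^2)^{1/2}(D^2C^2)^{1/2}$. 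The key identity is that $(C^2D^2)^{1/2}=U|CD|$ where $CD=U|CD|$... actually the slicker observation is that $(AB)^{1/2}(BA)^{1/2}$ and $(DC)^2$ have a common structure through the polar decomposition of $CD$; I would write $CD=U P$ with $P=|CD|=(DC^2D)^{1/2}$ and $U$ unitary, so $(AB)^{1/2}=(C^2D^2)^{1/2}=(CD\cdot DC)^{1/2}$ is conjugate to things expressible in $U,P$.

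\textbf{Cleaner approach via known square-root trace inequalities.} Rather than wrestle with polar decompositions, I would lean on the Araki–Lieb–Thirring inequality and the fact, used already in the excerpt (Theorem IX.2.10 of \cite{rbh}), that $\tr\,(A^{1/2}BA^{1/2})^{1/2}\ge \tr\,A^{1/2}B^{1/2}$, i.e. more generally $\tr\,(A^r B^r)^{1/2}$-type comparisons. For $(ii)$, note $\tr\,A^{3/2}B^{1/2}=\tr\,A\cdot A^{1/2}B^{1/2}$ and $\tr\,A(AB)^{1/2}=\tr\,A(A^{1/2}BA^{1/2})^{1/2}$ by the computation already displayed in the excerpt, so $(ii)$ is exactly the inequality $\tr\,A^{1/2}B^{1/2}A \le \tr\,(A^{1/2}BA^{1/2})^{1/2}A$, i.e. $\tr\,(A^{1/2}B^{1/2}-(A^{1/2}BA^{1/2})^{1/2}A^{-1/2})\cdot A\cdot A^{1/2}\le 0$; better, with $M=A^{1/2}B^{1/2}$ and $N=(A^{1/2}BA^{1/2})^{1/2}$ one wants $\tr\,A M \le \tr\,A^{1/2}N A^{1/2}$, and since $A^{1/2}MA^{-1/2}$... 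Let me instead substitute $A\to A$, and use that $A$ is a positive weight: the inequality $\tr\,A^{3/2}B^{1/2}\le\tr\,A(AB)^{1/2}$ should follow from $(i)$-type reasoning applied after replacing $B^{1/2}$ by $A^{1/2}B^{1/2}A^{-1/2}\cdot(\text{something})$, or directly from the scalar-weighted version of Theorem IX.2.10. Concretely, I expect $(ii)$ to reduce to: for positive definite $A,B$, $\tr\,A\,A^{1/2}B^{1/2}\le\tr\,A\,(A^{1/2}BA^{1/2})^{1/2}$, which is a weighted form of $\tr\,A^{1/2}B^{1/2}\le\tr\,(A^{1/2}BA^{1/2})^{1/2}$ with the positive-definite weight $A$; this type of weighted strengthening is known to follow because $A^{1/2}B^{1/2}\prec_{w}(A^{1/2}BA^{1/2})^{1/2}$ in a strong enough sense, or by an integral representation of the square root.

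\textbf{Main obstacle.} The hard part will be inequality $(i)$: unlike $(ii)$, it does not obviously reduce to a scalar-weighted version of a textbook majorization, because both sides are genuinely ``quadratic in the off-diagonal part.'' I expect the decisive step to be an identity expressing $(AB)^{1/2}(BA)^{1/2}$ in terms of the polar decomposition $A^{1/2}B^{1/2}=UH$ with $H=(B^{1/2}AB^{1/2})^{1/2}\ge 0$ and $U$ unitary: then $(A^{1/2}B^{1/2})^2=UHUH$ and $AB=UH^2U^{*}$... wait, $AB=A^{1/2}(A^{1/2}B^{1/2})B^{1/2}$, so one should track carefully which conjugation appears. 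Once the right normal form is found, $(i)$ becomes a trace inequality of the shape $\tr\,(UH)^2\le\tr\,|UH^2U^{*}|^{1/2}$-type expression, i.e. comparing $\tr\,UHUH$ with $\tr\,V H V^{*} H$ for a related unitary $V$, and the inequality $|\tr\,UHUH|\le\tr\,H^2$ (valid for any unitary $U$) should close it if the right side can be shown to equal or dominate $\tr\,H^2=\tr\,B^{1/2}AB^{1/2}=\tr\,AB$. So the plan's crux is: (a) get the polar-decomposition normal form; (b) identify $\tr\,(AB)^{1/2}(BA)^{1/2}$ with a manifestly real nonnegative quantity bounded below by $\tr\,AB$ or directly by $\tr\,(A^{1/2}B^{1/2})^2$; (c) invoke $|\tr\,(UH)^2|\le\tr\,H^2$ for the upper bound on the left side. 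Verifying that step (b) gives exactly the needed bound, rather than something off by a wrong constant, is where the real care is required.
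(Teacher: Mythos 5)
Your strategy for (i) is essentially the paper's: both sides are compared with the intermediate quantity $\tr\, AB$. The first half, $\tr\,(A^{1/2}B^{1/2})^2\le \tr\, AB$, you can indeed close with your bound $|\tr\,(UH)^2|\le \tr\, H^2$ (equivalently, Cauchy--Schwarz for the Frobenius inner product, or the Lieb--Thirring inequality the paper cites). The second half, $\tr\, AB\le \tr\,(AB)^{1/2}(BA)^{1/2}$, which you yourself flag as ``where the real care is required,'' you leave open; it is in fact short: since $BA=(AB)^*$, the square root with positive eigenvalues satisfies $(BA)^{1/2}=\left((AB)^{1/2}\right)^*$, so the right-hand side equals $||(AB)^{1/2}||_2^2$, and Schur's inequality gives $||(AB)^{1/2}||_2^2\ge \sum_j\left|\lambda_j\left((AB)^{1/2}\right)\right|^2=\sum_j\lambda_j(AB)=\tr\, AB$. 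So part (i) is recoverable, though not complete as written.

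Part (ii) contains a genuine gap. Your reduction of (ii) to $\tr\,\left(A\cdot A^{1/2}B^{1/2}\right)\le \tr\,\left(A\cdot(A^{1/2}BA^{1/2})^{1/2}\right)$ is correct, but the assertion that this ``weighted'' inequality follows from $\tr\,A^{1/2}B^{1/2}\le\tr\,(A^{1/2}BA^{1/2})^{1/2}$, or from a weak majorisation $A^{1/2}B^{1/2}\underset{w}{\prec}(A^{1/2}BA^{1/2})^{1/2}$ ``in a strong enough sense,'' is unsupported: weak majorisation of eigenvalues does not survive the insertion of a positive weight into the trace, and no integral representation is exhibited. That weighted inequality is the entire content of (ii). The paper proves the stronger log majorisation $\lambda(A^{3/4}B^{1/2}A^{3/4})\underset{\log}{\prec}\lambda(A^{1/2}(A^{1/2}BA^{1/2})^{1/2}A^{1/2})$ by the standard homogeneity-plus-antisymmetric-tensor scheme: one only needs the top-eigenvalue implication $A^{1/2}(A^{1/2}BA^{1/2})^{1/2}A^{1/2}\le I\Rightarrow A^{3/4}B^{1/2}A^{3/4}\le I$, i.e.\ $(A^{1/2}BA^{1/2})^{1/2}\le A^{-1}\Rightarrow B^{1/2}\le A^{-3/2}$, and this is exactly an instance of Furuta's inequality ($X\ge Y\ge 0\Rightarrow (X^{p+2r})^{1/p}\ge (X^rY^pX^r)^{1/p}$ with $X=A^{-1}$, $Y=(A^{1/2}BA^{1/2})^{1/2}$, $p=2$, $r=1/2$). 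Nothing in your proposal identifies Furuta's inequality or any substitute for it, so (ii) is not proved.
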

\begin{proof}
We will show that
\begin{equation}
\tr\, (A^{1/2}B^{1/2})^{2}\leq \tr\, AB \leq \tr\,
(AB)^{1/2}(BA)^{1/2}.\label{eq42}
\end{equation}
The first inequality in (\ref{eq42}) is the famous Lieb-Thirring
inequality (see IX.62 in \cite{rbh}). To prove the second
note that
\begin{eqnarray*}
\tr\, (AB)^{1/2}(BA)^{1/2}&=&||(AB)^{1/2}||_2^2\\
&\geq&\sum_{j=1}^{n}\left[\lambda_{j}(AB)^{1/2}\right]^{2}=\sum_{j=1}^{n}\lambda_{j}(AB)\\
&=&\tr\, AB.
\end{eqnarray*}
This proves the inequality (\ref{eq42}).

The inequality (\ref{eq41}) can be also be stated as
$$\tr\, A^{3/4}B^{1/2}A^{3/4}\leq \tr\,
A^{1/2}(A^{1/2}BA^{1/2})^{1/2}A^{1/2}.$$ We will prove a much
stronger log majorisation:
\begin{equation}
\lambda\left(A^{3/4}B^{1/2}A^{3/4}\right)\underset{\log}{\prec}\lambda\left(A^{1/2}(A^{1/2}BA^{1/2})^{1/2}A^{1/2}\right).\label{eq43}
\end{equation}
We first prove
\begin{equation}
\lambda_1^{\downarrow}\left(A^{3/4}B^{1/2}A^{3/4}\right)\leq
\lambda_1^{\downarrow}\left(A^{1/2}(A^{1/2}BA^{1/2})^{1/2}A^{1/2}\right).\label{eq44}
\end{equation}
As explained earlier, for this it suffices to prove the implication
$$A^{1/2}(A^{1/2}BA^{1/2})^{1/2}A^{1/2}\leq I\Longrightarrow
A^{3/4}B^{1/2}A^{3/4}\leq I.$$ This is equivalent to the statement
\begin{equation}
(A^{1/2}BA^{1/2})^{1/2}\leq A^{-1}\Longrightarrow B^{1/2}\leq
A^{-3/2}.\label{eq45}
\end{equation}
Here we invoke Furuta's inequality. This tells us that
$$X\geq Y\geq 0\Longrightarrow (X^{p+2r})^{1/p}\geq
(X^rY^pX^r)^{1/p}$$ for all $p\geq 1, r\geq 0.$ (See Corollary 4.4.2
in \cite{rbh1}). Choosing
$X=A^{-1},Y=(A^{1/2}BA^{1/2})^{1/2},p=2,r=1/2$ in Furuta's
inequality, we obtain the assertion \eqref{eq45}. This gives us the
inequality (\ref{eq44}).

Applying this to $k\textrm{th}$ antisymmetric tensor powers of the matrices
involved we obtain
$$\prod_{j=1}^{k}\lambda_j^{\downarrow}(A^{3/4}B^{1/2}A^{3/4})\leq
\prod_{j=1}^{k}\lambda_j^{\downarrow}(A^{1/2}(A^{1/2}BA^{1/2})^{1/2}A^{1/2}),$$
for $1\leq k\leq n.$ For $k=n$ there is equality here as both sides
are equal to ${\mathrm{det}}(A^{3/2}B^{1/2}).$ This proves
(\ref{eq43}), and as a corollary (\ref{eq41}).
\end{proof}

All the assertions of Theorem 2 have been established.

\section{Remarks}

The reader would have noticed that our Conjecture $1$ asserts only
weak majorisation and not weak log majorisation in (\ref{eq25}). In
\cite{au} Audenaert has shown the following determinant inequality
in the case $m=2$ and equal weights $(w_{1},w_{2})=(1/2,1/2),$
\begin{equation}
\det\,\Omega(A,B)\leq\det\,Q_{1/2}(A,B).\label{eq46}
\end{equation}
This goes in the direction opposite to (\ref{eq25}), and opposite
to what weak log majorisation would have implied.

It is reasonable to conjecture that the inequality (\ref{eq46})
remains true in the several variable case as well.

Theorem 2 supplements some results proved in \cite{bly}. In their
analysis of the Cartan mean, Lim and Palfia \cite{lp1} introduced
another version of the power mean. They showed that for $0<t<1,$
the equation
\begin{equation}
X=\sum_{j=1}^{m}w_{j}(X\#_tA_{j}),\label{eq47}
\end{equation}
has a unique positive definite solution. Call this $P_{t}( w
;A_{1},\dots,A_{m}).$ When $A_{1},\dots,A_{m}$ commute,
$P_{t}=\left(\sum_{j=1}^mw_{j}A_j^{t}\right)^{1/t}=Q_{t}.$ Lim and
Palfia showed that
\begin{equation}
{\underset{p\to
0}{\lim}}P_{t}( w ;A_{1},\dots,A_{m})=G( w ;A_{1},\dots,A_{m}).\label{eq48}
\end{equation}
Compare this with (\ref{eq21}).

It was conjectured in \cite{bly} that
\begin{equation}|||P_{t}( w ;A_1,\dots,A_{m})|||\leq
|||Q_{t}( w ;A_1,\dots,A_{m})|||,\label{eq49}
\end{equation}
for all $0<t<1.$ The case $||\cdot||_{\infty}$ of this had been proved in
\cite{ly} and the case $||\cdot||_1$ was proved in \cite{bly}. The
case $0<p<\infty$ has been proved recently in \cite{ddf}. Explicit
formulas for $P_t$ are known only when $m=2.$ In the special case of
equal weights $(w_{1},w_{2})=(1/2,1/2)$ we have
\begin{equation}
P_{1/2}(A,B)=\frac{1}{4}(A+B+2(A\#B)).\label{eq50}
\end{equation}
So in this special case the conjecture (\ref{eq49}) says
\begin{equation}
|||A+B+2(A\#B)|||\leq
|||A+B+A^{1/2}B^{1/2}+B^{1/2}A^{1/2}|||.\label{eq51}
\end{equation}
In addition to the $||\cdot||_p$ norms, $p=1,\infty,$ this was shown
to be true for $p=2$ in \cite{bly}. Compare the statements
(\ref{eq26}) and (\ref{eq51}).

In the course of the proofs in \cite{bly} certain log majorisations
complementary to (\ref{eq43}) have been proved. Together these say
\begin{equation}
\lambda\left(A^2(A^{-1}B)^{1/2}\right)\underset{\log}{\prec}\lambda\left(A^{3/2}B^{1/2}\right)
\underset{\log}{\prec}\lambda\left(A(AB)^{1/2}\right).\label{eq52}
\end{equation}
This implies the trace inequalities
\begin{equation}\tr\, A^{2}(A^{-1}B)^{1/2}\leq \tr\, A^{3/2}B^{1/2}\leq
\tr\,A(AB)^{1/2}.\label{eq53} \end{equation} These inequalities
illustrate the effect of rearranging factors in noncommutative
products. There are several inequalities of this kind  that are
known, the most famous being the Golden-Thompson and the
Lieb-Thirring inequalities.

In view of the comparison between $L$ and $\Omega$ given in
(\ref{eq19}) and between $Q_{1/2}$ and $\Omega$ conjectured in
(\ref{eq25}), we may ask how $L$ and $Q_{1/2}$ compare with each
other, It has been shown in \cite{bg} that for each $1\leq j\leq n,$
$\lambda_j^{\downarrow}(Q_{p}( w ;A_{1},\dots,A_{m}))$ is a
monotonically increasing function of $p$ on $(-\infty,\infty).$ In
particular, using (\ref{eq21}) we see from this
\begin{equation}
\lambda_j^{\downarrow}(L( w ;A_{1},\dots,A_{m}))\leq
\lambda_j^{\downarrow}(Q_{1/2}( w ;A_{1},\dots,A_{m})), \qquad
1\leq j\leq n.\label{eq54}
\end{equation}

\section{Acknowledgements.} The first author is a J. C. Bose National Fellow.
The work of Y.~Lim was supported by the National Research Foundation
of Korea (NRF) grant funded by the Korea government(MEST)
No.2015R1A3A2031159 and 2016R1A5A1008055.

\vskip.3in




\begin{thebibliography}{99}





\bibitem{abc}P. C. Alvarez-Esteban, E. del Barrio, J. A. Cuesta-Albertos and C.
Matran, {\it A fixed point approach to barycenters in Wasserstein
spaces}, J. Math. Anal. Appl. 441 (2016), 744-762.

\bibitem{a} T. Ando, {\it On some operator inequalities}, Math. Ann. 279 (1987), 157-159.

\bibitem{ah}T. Ando and F. Hiai, {\it Log majorization and complementary Golden-Thompson type inequalities}, Linear Algebra Appl. 197/198 (1994), 113-131.



\bibitem{au} K. M. R. Audenaert,
{\it A determinantal inequality for the geometric mean with an application in diffusion tensor imaging}, arXiv: 1502.06902.



\bibitem{bs} K. V. Bhagwat and R. Subramanian, {\it Inequalities between means of positive operators}, Math. Proc. Camb. Phil. Soc. 83 (1978), 393-401.

\bibitem{rbh} R. Bhatia, {\it Matrix Analysis,} Springer, 1997.

\bibitem{rbh1} R. Bhatia, {\it Positive Definite Matrices,} Princeton
University Press, 2007.


\bibitem{bg} R. Bhatia and P. Grover, {\it Norm inequalities related to the matrix geometric mean}, Linear Algebra Appl. 437 (2012), 726-733.

\bibitem{bh} R. Bhatia and J. Holbrook, {\it Riemannian geometry and matrix
geometric means}, Linear Algebra Appl. 413 (2006), 594-618.

\bibitem{bk} R. Bhatia and R. L. Karandikar, {\it Monotonicity of the matrix
geometric mean}, Math. Ann. 353 (2012), 1453-1467.



\bibitem{BJL}
R. Bhatia, T. Jain and Y. Lim, {\it On the Bures-Wasserstein
distance bewteen positive definite matrices}, to appear in
Expositiones Mathematicae.

\bibitem{bly} R. Bhatia, Y. Lim and T. Yamazaki, {\it Some norm inequalities for matrix means}, Linear Algebra Appl. 501 (2016), 112-122.




\bibitem{ddf} T. Dinh, R. Dumitru and J. Franc, On a conjecture of Bhatia, Lim and
Yamazaki, Linear Algebra Appl. \textbf{532} (2017), 140-145.



\bibitem{ffs}J. I. Fujii, M. Fujii and Y. Seo, {\it The Golden-Thompson-Segal type inequalities related to the weighted geometric mean due to Lawson-Lim}, J. Math. Inequal. 3 (2009), 511-518.




\bibitem{han} F. Hansen, {\it Means and concave products of positive semidefinite matrices}, Math. Ann. 264 (1983), 119-128.

\bibitem{HP}
F. Hansen and G. K. Pedersen, {\it Jensen's inequality for operators
and L\"{o}wner's theorem}, Math. Ann. \textbf{258} (1982), 229-241.


\bibitem{hp} F. Hiai and D. Petz, {\it Riemannian metrics on positive definite matrices related to means. II}, Linear Algebra Appl. 436 (2012), 2117-2136.









\bibitem{ll} J. Lawson and Y. Lim, {\it Monotonic properties of the least
squares mean}, Math. Ann. 351 (2011), 267-279.

\bibitem{lp1} Y. Lim and M. Palfia, {\it Matrix power means and the Karcher
mean}, J. Funct. Anal. 262 (2012), 1498-1514.


\bibitem{ly} Y. Lim and T. Yamazaki, {\it On some inequalities for the matrix power and Karcher means}, Linear Algebra Appl. 438 (2013), 1293-1304.

\bibitem{m1} M. Moakher, {\it A differential geometric approach to the geometric
mean of symmetric positive-definite matrices}, SIAM J. Matrix Anal. Appl. 26
(2005), 735-747.





\bibitem{t} A. Takatsu, {\it Wasserstein geometry of Gaussian measures}, Osaka J. Math. 48 (2011), 1005-1026.




\bibitem{y} T. Yamazaki, {\it The Riemannian mean and matrix inequalities related to the Ando-Hiai inequality and chaotic order}, Oper. Matrices 6 (2012), 577-588.

\end{thebibliography}
\end{document}